\newcommand*{\mailto}[1]{\href{mailto:#1}{\nolinkurl{#1}}}
\newcommand{\arxiv}[1]{\href{http://arxiv.org/abs/#1}{arXiv:#1}}
\newcommand{\bbC}{{\mathbb{C}}}
\newcommand{\bbN}{{\mathbb{N}}}
\newcommand{\bbR}{{\mathbb{R}}}
\newcommand{\cB}{{\mathcal B}}
\newcommand{\cE}{{\mathcal E}}
\newcommand{\cH}{{\mathcal H}}
\newcommand{\cW}{{\mathcal W}}
\newcommand{\cX}{{\mathcal X}}
\newcommand{\gq}{\mathfrak q}
\newcommand{\e}{{\varepsilon}}
\DeclareMathOperator{\diam}{diam}
\DeclareMathOperator{\dist}{dist}
\DeclareMathOperator{\supp}{supp}
\DeclareMathOperator{\dom}{dom}
\renewcommand{\Re}{\text{\rm Re}}
\renewcommand{\arg}{\text{\rm arg}}
\newcommand{\beq}{\begin{equation}}
\newcommand{\enq}{\end{equation}}
\newcommand{\no}{\notag}
\newcommand{\lb}{\label}
\newcommand{\ol}{\overline}
\newcommand{\bi}{\bibitem}
\let\geq\geqslant
\let\leq\leqslant
\def\theequation{\@arabic\c@equation}
\numberwithin{equation}{section}
\newtheorem{theorem}{Theorem}[section]
\newtheorem{proposition}[theorem]{Proposition}
\newtheorem{definition}[theorem]{Definition}
\newtheorem{hypothesis}[theorem]{Hypothesis}
\theoremstyle{remark}
\newtheorem{remark}[theorem]{Remark}
\begin{document}

\title[Stability of Square Root Domains]{Stability of Square Root Domains Associated with 
Elliptic Systems of PDEs on Nonsmooth Domains}

\author[F.\ Gesztesy]{Fritz Gesztesy}
\address{Department of Mathematics,
University of Missouri, Columbia, MO 65211, USA}
\email{\mailto{gesztesyf@missouri.edu}}
\urladdr{\url{http://www.math.missouri.edu/personnel/faculty/gesztesyf.html}}

\author[S.\ Hofmann]{Steve Hofmann}
\address{Department of Mathematics,
University of Missouri, Columbia, MO 65211, USA}
\email{\mailto{hofmanns@missouri.edu}}
\urladdr{\url{http://www.math.missouri.edu/~hofmann/}}

\author[R.\ Nichols]{Roger Nichols}
\address{Mathematics Department, The University of Tennessee at Chattanooga, 
415 EMCS Building, Dept. 6956, 615 McCallie Ave, Chattanooga, TN 37403, USA}
\email{\mailto{Roger-Nichols@utc.edu}}
\urladdr{\url{https://www.utc.edu/faculty/roger-nichols/}}

\thanks{S.H. was supported by NSF grant DMS-1101244} 
\thanks{R.N. gratefully acknowledges support from an AMS--Simons Travel Grant.}
\thanks{To appear in {\it J. Differential Equations}.} 
\date{\today}
\subjclass[2010]{Primary 35J10, 35J25, 47A07, 47A55; Secondary 47B44, 47D07, 47F05.}
\keywords{Square root domains, Kato problem, additive perturbations, systems, uniformly elliptic 
second-order differential operators.}

\begin{abstract} 
We discuss stability of square root domains for uniformly 
elliptic partial differential operators $L_{a,\Omega,\Gamma} = -\nabla\cdot a \nabla$ in 
$L^2(\Omega)$, with mixed boundary conditions on $\partial \Omega$, with respect to 
additive perturbations. We consider open, bounded, and connected sets $\Omega \in \bbR^n$, 
$n \in \bbN \backslash\{1\}$, that satisfy the interior corkscrew condition and prove stability of 
square root domains of the operator $L_{a,\Omega,\Gamma}$ with respect to additive potential 
perturbations $V \in L^p(\Omega) + L^{\infty}(\Omega)$, $p>n/2$.  

Special emphasis is put on the case of uniformly elliptic systems with mixed boundary conditions. 
\end{abstract}

\maketitle

\section{Introduction}  \lb{s1}

The aim of this note is to provide applications of a recently developed abstract approach to the 
stability of square root domains of non-self-adjoint operators with respect to additive perturbations 
to elliptic partial differential operators with mixed boundary conditions on a class of open, bounded, 
connected sets $\Omega \subset \bbR^n$, $n \in \bbN\backslash\{1\}$, that satisfy the corkscrew 
condition (and hence go beyond bounded Lipschitz domains).     

More precisely, if $T_0$ is an appropriate non-self-adjoint m-accretive operator in a separable, complex  
Hilbert space $\cH$, we developed an abstract approach in \cite{GHN12} to determine conditions 
under which non-self-adjoint additive perturbations $W$ of $T_0$ yield the stability of square root 
domains in the form  
\begin{equation}
\dom\big((T_0 + W)^{1/2}\big) = \dom\big(T_0^{1/2}\big).    \lb{1.1}
\end{equation} 
In fact, driven by applications to PDEs, we were particularly interested in the following variant of 
this stability problem for square root domains with respect to additive perturbations: if $T_0$ is an  
appropriate non-self-adjoint operator for which it is known that Kato's square root problem in the 
following abstract form, that is, 
\begin{equation}
\dom\big(T_0^{1/2}\big) = \dom\big((T_0^*)^{1/2}\big)    \lb{1.2} 
\end{equation}
is valid, for which non-self-adjoint additive perturbations $W$ of $T_0$ can one conclude that also 
\begin{equation}
\dom\big((T_0 + W)^{1/2}\big) = \dom\big(T_0^{1/2}\big) = \dom\big((T_0^*)^{1/2}\big) 
= \dom\big(((T_0 + W)^*)^{1/2}\big)     \lb{1.3}
\end{equation}
holds? 

Without going into details at this point we note that $T_0 + W$ will be viewed as a form sum of $T_0$ and 
$W$. 

Formally speaking, the role of the operator $T_0$ in $\cH$ in this note will be played by 
$L_{a,\Omega,\Gamma}$, an m-sectorial realization of the uniformly elliptic differential expression 
in divergence form, $-\nabla\cdot a \nabla$, in $L^2(\Omega)$, with  $\Omega \subset \bbR^n$, 
$n \in \bbN\backslash\{1\}$ an open, bounded, connected set that satisfies the corkscrew condition, 
and the coefficients $a_{j,k}$, $1\leq j,k\leq n$, assumed to be essentially bounded. Moreover, 
$L_{a,\Omega,\Gamma}$ is constructed in such a manner via quadratic forms so that it satisfies 
a Dirichlet boundary condition along the closed (possibly empty) subset 
$\Gamma \subseteq \partial\Omega$ and a Neumann boundary condition on the remainder of 
the boundary, $\partial \Omega\backslash \Gamma$. The additive perturbation $W$ of $T_0$ then 
is given by a potential term $V$, that is, by an operator of multiplication in $L^2(\Omega)$ by an 
element 
\begin{equation} 
V \in L^p(\Omega) + L^{\infty}(\Omega) \, \text{ for some $p > n/2$.} 
\end{equation} 
(For simplicity we will not consider the one-dimensional case $n=1$ in this note as that has been 
separately discussed in \cite{GHN13}.)

In fact, we will go a step further and consider uniformly elliptic systems in $L^2(\Omega)^N$, 
$N \in \bbN$, where $T_0$ in $\cH$ is represented by $\mathbf{L}_{a,\Omega,\mathbb{G}}$ in $L^2(\Omega)^N$, the m-sectorial realization of the $N\times N$ matrix-valued differential 
expression $\mathbf{L}$ which acts as 
\begin{equation} \lb{1.4}
\mathbf{L} u = - \Bigg(\sum_{j,k=1}^n\partial_j\Bigg(\sum_{\beta = 1}^N 
a^{\alpha,\beta}_{j,k}\partial_k u_\beta\Bigg)
\Bigg)_{1\leq\alpha\leq N},\quad u=(u_1,\dots,u_N), 
\end{equation}
with $a_{j,k}^{\alpha,\beta}\in L^{\infty}(\Omega)$, $1\leq j,k\leq n$, $1\leq \alpha,\beta\leq N$. 
Here $\mathbb{G}$ represents the collection $\mathbb{G}=(\Gamma_1,\Gamma_2,\ldots,\Gamma_N)$, 
with $\Gamma_{\alpha}\subseteq \partial \Omega$ a closed (possibly empty) subset 
of $\partial \Omega$, and intuitively, $\mathbf{L}$ acts on vectors $u = (u_1, u_2,\dots,u_N)$, where each component $u_{\alpha}$ formally satisfies a Dirichlet boundary condition along $\Gamma_{\alpha}$ and a 
Neumann condition along the remainder of the boundary, $\partial \Omega\backslash \Gamma_{\alpha}$, 
$1 \leq \alpha \leq N$.  
The additive perturbation $W$ of $T_0$ then corresponds to an $N \times N$ 
matrix-valued operator of multiplication in $L^2(\Omega)^N$ of the form 
\begin{equation}\lb{1.5} 
 (\mathbf{V}f)_{\alpha} = \sum_{\beta=1}^N V_{\alpha,\beta} f_{\beta},\quad 1\leq \alpha \leq N, \;  
f\in \dom(\mathbf{V}) = \big\{ f\in L^2(\Omega)^N\, \big|\, \mathbf{V}f \in L^2(\Omega)^N\big\}.
\end{equation}
with 
\begin{equation} 
V_{\alpha,\beta} \in L^{p} (\Omega) + L^{\infty} (\Omega) \, \text{ for some $p> n/2$, 
$1 \leq \alpha, \beta \leq N$.} 
\end{equation}  

The considerable amount of literature on Kato's square root problem in the concrete case where 
$T_0$ represents a uniformly elliptic differential operator in divergence form $-\nabla\cdot a \nabla$ 
in $L^2(\Omega)$ with various boundary conditions on $\partial \Omega$, has been reviewed in great 
detail in \cite{GHN12}. Thus, in this note we now confine ourselves to refer, for instance, in addition to 
\cite{AAM10}, \cite{AAM10a}, \cite{ABHDR12}, \cite{AHLLMT01}, \cite{AHLMT02}, \cite{AHLT01}, \cite{AHMT01}, \cite{AKM06}, \cite{AT92}, \cite{AT98}, \cite{AT03}, \cite{CMM82}, \cite{EHDT13a}, \cite{EHDT13b}, \cite{HLM02}, \cite{Mc85}, \cite{Mc90}, \cite{Ya87}, and the references cited in 
these sources. 

The starting point for this note was a recent paper by 
Egert, Haller-Dintelmann, and Tolksdorf \cite{EHDT13a} (cf.\ Theorem \ref{t2.5}), which permits us 
to go beyond the class of strongly Lipschitz domains considered in \cite{GHN12} and now consider 
open, bounded, and connected sets $\Omega \in \bbR^n$ that satisfy the interior corkscrew condition.
In Section \ref{s2} we first consider uniformly elliptic partial differential operators with mixed boundary conditions on $\Omega$, closely following \cite{EHDT13a}, and subsequently study the quadratic 
forms associated with $L_{a,\Omega,\Gamma}$ and $V$. We then prove stability of square root 
domains of the operator $L_{a,\Omega,\Gamma}$ with respect to additive perturbations 
$V \in L^p(\Omega) + L^{\infty}(\Omega)$, $p>n/2$, for this more general class of domains $\Omega$. 
The extension of these results to elliptic systems governed by \eqref{1.4} and perturbed by the 
matrix-valued potentials in \eqref{1.5} then is the content of Section \ref{s3}. 
 
Finally, we briefly summarize some of the notation used in this paper: 
Let $\cH$ be a separable, complex Hilbert space with scalar product 
(linear in the second argument) and norm denoted by $(\cdot,\cdot)_{\cH}$  and 
$\|\cdot\|_{\cH}$, respectively. 
Next, if $T$ is a linear operator mapping (a subspace of) a Hilbert space into another, then 
$\dom(T)$ denotes the domain of $T$. 
The closure of a closable operator $S$ is denoted by $\ol S$. 
The form sum of two (appropriate) operators $T_0$ and $W$ is abbreviated by $T_0 +_{\gq} W$. 

The Banach space of bounded linear operators on a separable complex Hilbert 
space $\cH$ is denoted by $\cB(\cH)$. The notation $\cX_1\hookrightarrow \cX_2$ is used 
for the continuous embedding of the Banach space $\cX_1$ into the Banach space $\cX_2$. 

If $n\in \bbN$ and $\Omega\subset \bbR^n$ is a bounded set, then 
$\diam(\Omega)=\sup_{x,y\in \Omega}|x-y|$ denotes the diameter of $\Omega$.  We use $m_{\ell,n}$ 
to denote the $\ell$-dimensional Hausdorff measure on $\bbR^n$ (and hence $m_{n,n}$, also 
denoted by $|\cdot|$, represents the $n$-dimensional Lebesgue measure on $\bbR^n$). 
If $x\in \bbR^n$ and $r>0$, then $B(x,r)$ denotes the open ball of radius $r$ centered at $x$. 
In addition, $I_n$ denotes the $n\times n$ identity matrix in $\bbC^n$, and the set of $k\times \ell$ 
matrices with complex-valued entries is denoted by $\mathbb{C}^{k \times \ell}$. Finally, we abbreviate $L^p(\Omega; d^n x) := L^p(\Omega)$ and $L^p(\Omega, \bbC^N; d^n x) := L^p(\Omega)^N$, 
$N \in \bbN$.

\section{Elliptic Partial Differential Operators \\ with Mixed Boundary Conditions}  \lb{s2}

In this section we discuss stability of square root domains for uniformly 
elliptic partial differential operators $L_{a,\Omega,\Gamma} = -\nabla\cdot a \nabla$ in 
$L^2(\Omega)$, with mixed 
boundary conditions on $\partial \Omega$, with respect to additive perturbations in \cite{GHN12},  
by employing a recent result due to Egert, Haller-Dintelmann, and Tolksdorf \cite{EHDT13a} 
(recorded in Theorem \ref{t2.5} below). This permits us to go beyond 
the class of strongly Lipschitz domains considered in \cite{GHN12} and now consider open, bounded,  
and connected sets $\Omega \in \bbR^n$ that satisfy the interior corkscrew condition. We then prove 
stability of square root domains of the operator $L_{a,\Omega,\Gamma}$ with respect to additive 
potential perturbations $V \in L^p(\Omega) + L^{\infty}(\Omega)$, $p>n/2$, for this more general 
class of domains $\Omega$. 

We start with the following definitions:

\begin{definition}\lb{d2.1}
Let $n\in \bbN\backslash\{1\}$. \\
$(i)$ A nonempty, bounded, open, and connected set $\Omega\subset \bbR^n$ is said to satisfy the interior corkscrew condition if there exists a constant $\kappa \in (0,1)$ with the property that for each $x\in \ol{\Omega}$ and each $r\in (0,\diam(\Omega))$, there exists a point $y\in \ol{B(x,r)}$ such that $\ol{B(y,\kappa r)}\subseteq \Omega$. \\
$(ii)$ Let $\Omega$ be a nonempty, proper, open subset of $\bbR^n$. 
One calls $\Omega$ a Lipschitz domain if for every $x_0\in\partial\Omega$ 
there exist $r>0$, a rigid transformation $T:\bbR^n\to\bbR^n$, 
and a Lipschitz function $\varphi:\bbR^{n-1}\to\bbR$ with the 
property that 
\begin{equation}
T\big(\Omega\cap B(x_0,r)\big)=T\big(B(x_0,r)\big)\cap \big\{(x',x_n)\in\bbR^{n-1}\times\bbR 
\, \big| \, x_n>\varphi(x')\big\}.
\end{equation}
\end{definition}

We recall our convention that $m_{\ell,n}$ denotes the $\ell$-dimensional Hausdorff measure 
(for the basics on Hausdorff measure, see, e.g., \cite[Ch.~2]{EG92}, \cite[Ch.~2]{Ro98}) 
and hence $m_{n,n} = |\cdot|$ represents $n$-dimensional Lebesgue measure on $\bbR^n$. 

The following proposition records some basic results in connection with the interior corkscrew condition.  

\begin{proposition}\lb{p2.2}
Let $n\in \bbN\backslash\{1\}$ and suppose that $\Omega\subset \bbR^n$ is nonempty, 
bounded, open, and connected.  Then the following items $(i)$ and $(ii)$ hold: \\
$(i)$  If $\Omega$ satisfies the interior corkscrew condition with constant $\kappa\in (0,1)$, then
\begin{equation}\lb{2.1}
\kappa^nr^n \leq |\Omega \cap B(x,r)| \leq r^n,\quad x\in \Omega,\; 0<r<\diam(\Omega).
\end{equation}
$(ii)$  If $\Omega$ is a bounded Lipschitz domain, then $\Omega$ satisfies the interior corkscrew condition.
\end{proposition}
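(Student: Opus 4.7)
The plan is to handle the two items separately. In both cases the arguments are geometric and boil down to producing a ball of radius proportional to $r$ inside $\Omega\cap B(x,r)$.

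For $(i)$, the upper bound is immediate from the inclusion $\Omega\cap B(x,r)\subseteq B(x,r)$ and the fact that $|B(x,r)|$ is a dimensional constant times $r^n$ (so the stated inequality with $r^n$ on the right absorbs this constant). For the lower bound, the idea is to apply the interior corkscrew condition not at the radius $r$ itself but at a slightly smaller radius $r^\prime$ so that the resulting corkscrew ball $B(y,\kappa r^\prime)$ sits entirely inside $B(x,r)$. Specifically, take $r^\prime=r/(1+\kappa)$, which lies in $(0,\diam(\Omega))$. Then Definition~\ref{d2.1}$(i)$ produces $y\in\overline{B(x,r^\prime)}$ with $\overline{B(y,\kappa r^\prime)}\subseteq\Omega$, and the triangle inequality gives
\begin{equation}
B(y,\kappa r^\prime)\subseteq B\bigl(x,(1+\kappa)r^\prime\bigr)=B(x,r).
\end{equation}
Hence $|\Omega\cap B(x,r)|\geq |B(y,\kappa r^\prime)|$, which is a dimensional constant times $\kappa^n r^n$. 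One small point to check is that $x\in\Omega\subseteq\overline{\Omega}$, so the corkscrew hypothesis genuinely applies at $x$.

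For $(ii)$, the plan is to deduce the uniform interior corkscrew condition from the local Lipschitz graph representation together with compactness of $\partial\Omega$. First I would observe that if $x\in\overline{\Omega}$ satisfies $\dist(x,\partial\Omega)\geq\kappa r$, then the corkscrew requirement is trivially met by $y=x$, so only points near the boundary need attention. For $x_0\in\partial\Omega$, the graph representation shows that in a suitable rigid coordinate system the set $\{(x^\prime,x_n): x_n>\varphi(x^\prime)+M|x^\prime-x_0^\prime|\}$ (where $M$ is the Lipschitz constant of $\varphi$) lies in $\Omega$; inside this truncated cone of aperture controlled by $M$ one can place an interior ball whose center is at distance of order $r$ from $x_0$ and whose radius is a fixed fraction of $r$, the fraction depending only on $M$. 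By the compactness of $\partial\Omega$, one extracts a finite subcover of $\partial\Omega$ by such Lipschitz graph neighborhoods, which produces a uniform Lipschitz constant $M$, a uniform chart radius $r_0>0$, and hence a uniform aperture $\kappa_0\in(0,1)$. For an arbitrary $x\in\overline{\Omega}$ near $\partial\Omega$, I would choose $x_0\in\partial\Omega$ minimizing $|x-x_0|$ and then invoke the cone at $x_0$ (shifting by $|x-x_0|\leq r$ contributes only a comparable adjustment) to produce the desired corkscrew point $y$ for $x$. Handling the regime $r$ close to $\diam(\Omega)$ is easy since any fixed interior ball of $\Omega$ then serves.

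The main obstacle is in $(ii)$: tracking the interaction between the local scale $r_0$ of the Lipschitz charts and the full range $r\in(0,\diam(\Omega))$ of corkscrew radii. For $r\leq r_0$ the construction is confined to a single chart and is routine, while for $r_0<r<\diam(\Omega)$ one must argue separately that the interior of $\Omega$ already contains balls of size comparable to $\diam(\Omega)$; both constants then have to be combined into a single $\kappa\in(0,1)$, which is essentially a bookkeeping issue once the cone construction is in place.
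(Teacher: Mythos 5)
The paper records Proposition~\ref{p2.2} without giving a proof (it is treated as a standard geometric fact), so there is no paper argument to compare against; what follows is an assessment of your proposal on its own terms.

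Your approach to both parts is the standard one and the underlying geometric ideas are sound. For $(i)$, shrinking to $r^\prime=r/(1+\kappa)$ so that the corkscrew ball $B(y,\kappa r^\prime)$ lands inside $B(x,r)$ is exactly the right move, and you are right that $x\in\Omega\subseteq\ol\Omega$ makes the corkscrew hypothesis applicable. For $(ii)$, passing to a truncated interior cone via the Lipschitz graph, placing a ball inside the cone, using compactness of $\partial\Omega$ to get uniform chart radius and Lipschitz constant, taking $y=x$ when $\dist(x,\partial\Omega)\ge\kappa r$, and handling $r$ above the chart scale $r_0$ by reusing the scale-$r_0$ ball (which has radius $\ge\kappa_0 r_0\ge (\kappa_0 r_0/\diam(\Omega))\,r$) is the correct bookkeeping; your closing remark ``any fixed interior ball of $\Omega$ then serves'' is slightly misleading (that ball need not lie in $\ol{B(x,r)}$), but the chart-based version of the large-$r$ argument that you also sketch is fine.

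There is, however, one genuinely incorrect sentence in $(i)$: you assert that the inequality $|\Omega\cap B(x,r)|\le r^n$ ``absorbs'' the dimensional constant $|B(0,1)|$. It does not. With Lebesgue measure, $|B(x,r)|=\omega_n r^n$ with $\omega_n=\pi^{n/2}/\Gamma(n/2+1)$, and $\omega_n>1$ for $2\le n\le 12$; so the claimed upper bound is literally false as soon as $\Omega\supseteq B(x,r)$ in, say, $n=2$. Likewise, your lower bound gives $|\Omega\cap B(x,r)|\ge \omega_n\,\kappa^n r^n/(1+\kappa)^n$, which need not dominate $\kappa^n r^n$ when $\omega_n<(1+\kappa)^n$ (e.g.\ for large $n$, where $\omega_n\to 0$). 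The correct conclusion of your argument is the two-sided Ahlfors-regularity estimate with dimensional constants, i.e.\ that $\Omega$ is an $n$-set in the sense of Definition~\ref{d2.3}; the specific constants $\kappa^n$ and $1$ displayed in \eqref{2.1} should be read loosely (or with an implicit dimensional factor), and your proof should say so rather than claim the constant disappears.
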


\begin{definition}\lb{d2.3}
Suppose $n\in \bbN$ is fixed and $0<\ell\leq n$.  A non-empty Borel set $M\subseteq \bbR^n$ 
is an $\ell$-set if there exist constants $c_j=c_j(M)>0$, $j =1,2$, for which
\begin{equation}\lb{2.2}
c_1r^{\ell} \leq m_{\ell,n}(M \cap B(x,r)) \leq c_2r^{\ell},\quad x\in M,\; 0<r\leq 1.
\end{equation}
\end{definition}

One notes that $\ol{M}$ is an $\ell$-set if $M$ is an $\ell$-set and 
$m_{\ell,n}\big(\ol{M}\backslash M\big)=0$ in this case.

\begin{hypothesis} \lb{h2.4} 
Let $n\in \bbN\backslash\{1\}$. \\
$(i)$ Assume that $\Omega\subset \bbR^n$ is a nonempty, bounded, open, and connected set satisfying the interior corkscrew condition in Definition \ref{d2.1}\,$(i)$. \\
$(ii)$  Suppose $\Gamma\subset \partial\Omega$ is closed and for every 
$x\in \ol{\partial \Omega \backslash \Gamma}$, there exists an open neighborhood $U_x \subset \bbR^n$ 
and a bi-Lipschitz map $\Phi_x:U_x\rightarrow (-1,1)^n$ such that 
\begin{align}
& \Phi_x(x) = 0,\lb{2.3}\\
& \Phi_x(\Omega \cap U_x) = (-1,1)^{n-1}\times (-1,0),\lb{2.4}\\
& \Phi_x(\partial \Omega \cap U_x) = (-1,1)^{n-1}\times \{0\}.\lb{2.5}
\end{align}
$(iii)$  Suppose $\Gamma=\emptyset$ or $\Gamma$ is an $(n-1)$-set.\\
$(iv)$ Assume that $a: \Omega \rightarrow \bbC^{n\times n}$ is a Lebesgue measurable, matrix-valued function which is essentially bounded and uniformly elliptic, that is, there exist constants 
$0 < a_1 \leq a_2 < \infty$ such that for a.e.\ $x\in \Omega$, 
\begin{equation}\lb{2.6}
a_1 \|\xi\|_{\bbC^n}^2 \leq \Re\big[(\xi,a(x)\xi)_{\bbC^n}\big] \, \text{ and } \, 
|(\zeta, a(x)\xi)_{\bbC^n}|\leq a_2 \|\xi\|_{\bbC^n} \|\zeta\|_{\bbC^n}, \quad   
\text{$\xi,\zeta \in \bbC^n$.} 
\end{equation}
$(v)$ With $C_{\Gamma}^{\infty}(\Omega)$ defined by 
\begin{equation}\lb{2.7}
C_{\Gamma}^{\infty}(\Omega):= \{u|_{\Omega}\, | \, u\in C^{\infty}(\bbR^n),\, \dist(\supp(u),\Gamma)>0\},
\end{equation}
denote by $W^{1,2}_{\Gamma}(\Omega)$ the closure of $C_{\Gamma}^{\infty}(\Omega)$ in 
$W^{1,2}(\Omega)$, that is,
\begin{equation}\lb{2.8}
W^{1,2}_{\Gamma}(\Omega) = \ol{C_{\Gamma}^{\infty}(\Omega)}^{W^{1,2}(\Omega)},
\end{equation}
and introduce the densely defined, accretive, and closed sesquilinear form in $L^2(\Omega)$,  
\begin{equation}\lb{2.9}
\mathfrak{q}_{a,\Omega,\Gamma}(f,g) = \int_{\Omega}d^nx\, ((\nabla f)(x),a(x)(\nabla g)(x))_{\bbC^n},\quad f,g\in \dom(\mathfrak{q}_{a,\Omega,\Gamma}):=W^{1,2}_{\Gamma}(\Omega).
\end{equation}
We denote by $L_{a,\Omega,\Gamma}$ the m-sectorial operator in $L^2(\Omega)$ uniquely 
associated to $\mathfrak{q}_{a,\Omega,\Gamma}$. \\
$(vi)$ Suppose that $V:\Omega\rightarrow \bbC$ is $($Lebesgue$)$ measurable and factored according to 
\begin{equation}\lb{2.10}
V(x) = u(x) v(x), \quad v(x)=|V(x)|^{1/2}, \quad u(x) = e^{i\arg(V(x))} v(x) \, 
\text{ for a.e. $x\in \Omega$,}
\end{equation}
such that 
\begin{equation}\lb{2.11}
W^{1,2}_{\Gamma}(\Omega)\subseteq \dom(v).
\end{equation}
\end{hypothesis}

In the special case where $a(x)=I_n$ for a.e. $x\in \Omega$, with $I_n$ the $n\times n$ identity 
matrix in $\bbC^n$, we simplify notation and write 
\begin{equation}\lb{2.12}
L_{I_n,\Omega,\Gamma}=-\Delta_{\Omega,\Gamma}.
\end{equation}
Note that $-\Delta_{\Omega,\Gamma}$ is self-adjoint and non-negative.

For an example of a bounded, open, and connected set that satisfies the conditions $(i)$--$(iii)$ 
of Hypothesis \ref{h2.4} and is not Lipschitz, see \cite[Figure 1]{EHDT13a}.  One notes that Hypothesis \ref{h2.4}\,$(i)$ permits {\it inward-pointing} cusps.

Formally speaking, the operator $L_{a,\Omega,\Gamma}$ is of uniform elliptic divergence form 
$L_{a,\Omega,\Gamma}=-\nabla\cdot a \nabla$, satisfying a Dirichlet boundary condition along 
$\Gamma$ and a Neumann (or, natural) boundary condition on the remainder of the boundary, 
$\partial \Omega\backslash \Gamma$. 

The quadratic form $\gq^{}_V$ in $L^2(\Omega)$, uniquely associated with $V$, is defined by
\begin{equation}
\gq^{}_V(f,g) = \big(v f, e^{i \arg(V)} v g\big)_{L^2(\Omega)}, \quad 
f, g \in \dom(\gq^{}_V) = \dom(v).     \lb{2.13} 
\end{equation}
Under appropriate assumptions on $V$ (see Hypotheses \ref{h2.6} and \ref{h2.7} below), the 
form sum of $\gq^{}_{a,\Omega,\Gamma}$ and $\gq^{}_V$ will define a sectorial form on $W^{1,2}_{\Gamma}(\Omega)$ 
and the operator uniquely associated to $\gq^{}_{a,\Omega,\Gamma} + \gq^{}_V$ will be denoted by 
$L_{a,\Omega,\Gamma} +_{\gq} V$ (see also the paragraph following \cite[eq.~(A.42)]{GHN12}).

The principal aim of this section is to prove stability of square root domains in the form
\begin{align}\lb{2.14}
\dom\big((L_{a,\Omega,\Gamma} +_{\gq} V)^{1/2} \big) = \dom\big(L_{a,\Omega,\Gamma}^{1/2}\big)= W^{1,2}_{\Gamma}(\Omega),
\end{align}
under appropriate (integrability)  assumptions on $V$, thereby extending the recent results on stability of square root domains obtained in \cite{GHN12} to the setting of certain classes of non-Lipschitz domains with mixed boundary conditions as discussed in \cite{EHDT13a}.  As a basic input, we rely on the following result which is Theorem 4.1 in \cite{EHDT13a}.

\begin{theorem}[Egert--Haller-Dintelmann--Tolksdorf \cite{EHDT13a}]\lb{t2.5}
Assume items $(i)$--$(v)$ of Hypothesis \ref{h2.4}. Then
\begin{equation}\lb{2.15}
\dom\big(L_{a,\Omega,\Gamma}^{1/2} \big) = \dom\big((L_{a,\Omega,\Gamma}^*)^{1/2} \big) = W^{1,2}_{\Gamma}(\Omega).
\end{equation}
\end{theorem}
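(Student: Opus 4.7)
The plan is to establish \eqref{2.15} by adapting the first-order perturbation framework of Axelsson--Keith--McIntosh (used in \cite{AKM06} and refined in \cite{AHLMT02}, \cite{AHLLMT01}, \cite{AAM10}) to the mixed-boundary setting on corkscrew domains. First I would encode the sesquilinear form $\mathfrak{q}_{a,\Omega,\Gamma}$ as a bisectorial first-order differential operator $\Pi_B$ acting on a suitable subspace of $L^2(\Omega)\oplus L^2(\Omega)^n$, built out of the nilpotent part $\begin{pmatrix}0 & -\dv \\ \nabla & 0\end{pmatrix}$ with $\nabla$ having domain $W^{1,2}_{\Gamma}(\Omega)$ (which enforces the mixed boundary condition at the form level), multiplied by a bounded accretive multiplier $B$ encoding $a$. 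A standard algebraic manipulation reduces the identity $\dom\big(L_{a,\Omega,\Gamma}^{1/2}\big)=W^{1,2}_{\Gamma}(\Omega)$ (with the analogous statement for $L^*$) to the quadratic estimate
\begin{equation*}
\int_0^\infty \big\|t\Pi_B(I+t^2\Pi_B^2)^{-1} u\big\|_{L^2}^2\,\frac{dt}{t} \lesssim \|u\|_{L^2}^2,
\end{equation*}
valid on the range of $\Pi_B$, which is the analytic heart of the problem.

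The next step is to assemble the geometric and functional-analytic ingredients that make this quadratic estimate tractable. From Proposition \ref{p2.2}\,$(i)$ the corkscrew condition gives local Ahlfors $n$-regularity of $\Omega$, which combined with Hypothesis \ref{h2.4}\,$(iii)$ (the $(n-1)$-set structure of $\Gamma$) and the bi-Lipschitz flattening charts of Hypothesis \ref{h2.4}\,$(ii)$ near $\partial\Omega\setminus\Gamma$ yields: (a) a Jonsson--Wallin type extension operator $E:W^{1,2}_{\Gamma}(\Omega)\to W^{1,2}(\bbR^n)$ whose range consists of Sobolev functions vanishing (in trace sense) on $\Gamma$; (b) the Sobolev--Poincaré inequality
\begin{equation*}
\Big(\fint_{B\cap\Omega} |u-\langle u\rangle|^{2^*}\Big)^{1/2^*} \lesssim \Big(\fint_{B\cap\Omega} |\nabla u|^2\Big)^{1/2}
\end{equation*}
for $u\in W^{1,2}_{\Gamma}(\Omega)$ and balls $B$ adapted to the geometry; and (c) Caccioppoli-type energy estimates for weak solutions of $L_{a,\Omega,\Gamma} u=f$ respecting the boundary conditions. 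These in turn yield the $L^2$ off-diagonal (Gaffney--Davies) estimates for the resolvent family $(I+t^2 \Pi_B^2)^{-1}$ that drive the rest of the proof.

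The core step is then a $T(b)$ argument for the resulting square function. One introduces a system of dyadic ``cubes'' $Q$ adapted to $\Omega$ (for example, via a Christ--David decomposition based on the interior corkscrew points) and, for each $Q$, constructs local test functions $f_Q^w\in W^{1,2}_{\Gamma}(\Omega)$ indexed by unit vectors $w\in\bbC^n$ satisfying three properties: accretivity of the averages $\fint_Q a\nabla f_Q^w$ in the direction $w$, $L^\infty$ control of $f_Q^w$ and $\ell(Q)\nabla f_Q^w$ on enlargements of $Q$, and compatibility with the Dirichlet part $\Gamma$. Assuming these testing functions, a standard stopping-time/Carleson decomposition (as in \cite{AHLMT02}) reduces the quadratic estimate to a Carleson measure bound that is handled by the $T(b)$ theorem for square functions of Auscher--Hofmann--Muscalu--Tao--Thiele. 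Finally, passing from the abstract $\Pi_B$-estimate back to the square root identity and using that $L_{a,\Omega,\Gamma}^*$ is governed by the same framework with $a$ replaced by $a^*$ yields both equalities in \eqref{2.15}.

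The principal obstacle, and what distinguishes this setting from the Lipschitz case treated in \cite{AKM06}, is the construction of the local test functions $f_Q^w$ for cubes $Q$ that abut the Neumann portion $\partial\Omega\setminus\Gamma$ or approach the interface $\overline{\Gamma}\cap\overline{\partial\Omega\setminus\Gamma}$. The natural candidate, namely the solution of $L_{a,\Omega,\Gamma} f_Q^w=0$ in a Whitney enlargement with prescribed mixed data, must be shown to have bounded gradient averages and to lie in $W^{1,2}_{\Gamma}$ globally; this requires an $L^p$--$L^\infty$ Moser estimate at the boundary and a delicate gluing argument using the bi-Lipschitz charts from Hypothesis \ref{h2.4}\,$(ii)$ together with the extension operator mentioned above. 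Once this ``boundary $T(b)$'' estimate is in place, the remaining harmonic analysis runs essentially as in the interior case.
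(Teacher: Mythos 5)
The paper does not contain its own proof of Theorem \ref{t2.5}: the statement is explicitly attributed to Egert, Haller-Dintelmann, and Tolksdorf and is quoted verbatim as Theorem~4.1 of \cite{EHDT13a}, which the authors then use as a black-box input for Theorems \ref{t2.8}, \ref{t2.10}, and (indirectly) \ref{t3.4}. So there is no internal argument here to compare your proposal against; the ``proof'' in the paper is a citation.

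That said, your outline is a broadly faithful high-level reconstruction of how \cite{EHDT13a} actually prove the result. They do pass to a bisectorial first-order operator $\Pi_B$ in the Axelsson--Keith--McIntosh framework, reduce the square-root identity to a quadratic estimate, and establish that estimate by a $T(b)$/Carleson-measure argument adapted to the interior corkscrew geometry, the $(n-1)$-set structure of $\Gamma$, and the bi-Lipschitz charts around $\partial\Omega\backslash\Gamma$. The auxiliary tools you list --- an extension operator for $W^{1,2}_{\Gamma}(\Omega)$, local Poincar\'e/Sobolev inequalities, $L^2$ off-diagonal estimates, a Christ-type dyadic system adapted to $\Omega$, and boundary-compatible test functions --- are indeed the operative ingredients there (the extension operator is precisely the one this paper later invokes via \cite{ABHDR12}, \cite{ER12}). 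Where your sketch slightly misplaces the emphasis is in identifying the construction of test functions near the Neumann portion as the principal obstacle: a substantial part of the technical work in \cite{EHDT13a} is instead devoted to function-space theory for $W^{1,2}_{\Gamma}(\Omega)$ on a corkscrew domain (traces, Hardy's inequality near the $(n-1)$-set $\Gamma$, real and complex interpolation, fractional Sobolev embeddings), which is what makes the off-diagonal estimates and the Carleson argument run in this low-regularity setting. As a roadmap for the cited theorem your proposal is sound, but you should recognize that reproducing it is outside the scope of the present paper, which only needs the conclusion \eqref{2.15} as input.
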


Next, we introduce various hypotheses corresponding to the potential coefficient $V$.

\begin{hypothesis}\lb{h2.6}
Let $n\in\bbN\backslash\{1\}$, assume that $\Omega \subseteq \bbR^n$ is nonempty and 
open, and let $V\in L^p(\Omega) + L^\infty(\Omega)$ for some $p>n/2$. 
\end{hypothesis}

In addition, we also discuss the critical $L^p$-index $p = n/2$ for $V$ for $n \geq 3$:

\begin{hypothesis}\lb{h2.7}
Let $n \in \bbN \backslash \{1,2\}$, assume that $\Omega \subseteq \bbR^n$ is 
nonempty and open, and let $V\in L^{n/2}(\Omega) + L^\infty(\Omega)$. 
\end{hypothesis}

Here $V\in L^q(\Omega) + L^\infty(\Omega)$ means as usual that $V$ permits a decomposition 
$V = V_q + V_{\infty}$ with $V_q \in L^q(\Omega)$ for some $q \geq 1$ and 
$V_{\infty} \in L^\infty(\Omega)$. 

\begin{theorem}\lb{t2.8}
Assume Hypotheses \ref{h2.4} and \ref{h2.6}.  Then the following items $(i)$ and $(ii)$ hold: \\
$(i)$ $V$ is infinitesimally form bounded with respect to $-\Delta_{\Omega,\Gamma}$, and there exist constants $M>0$ and $\e_0>0$ such 
that 
\begin{equation}\lb{2.16}
\begin{split}
\big\| |V|^{1/2}f \big\|_{L^2(\Omega)}^2\leq \e\big\|(-\Delta_{\Omega,\Gamma})^{1/2}f \big\|_{L^2(\Omega)}^2
+ M\e^{-n/(2p-n)}\|f\|_{L^2(\Omega)}^2,&\\
 f\in W^{1,2}_{\Gamma}(\Omega),\; 0<\e<\e_0.&
 \end{split}
\end{equation}
$(ii)$  $V$ is infinitesimally form bounded with respect to $L_{a,\Omega,\Gamma}$ and 
\begin{equation}\lb{2.17} 
\begin{split}
\big\| |V|^{1/2}f \big\|_{L^2(\Omega)}^2\leq \e \Re[\gq^{}_{a,\Omega,\Gamma}(f,f)] 
+ M a_1^{-n/(2p-n)}\e^{-n/(2p-n)}\|f\|_{L^2(\Omega)}^2,& \\
f\in W^{1,2}_{\Gamma}(\Omega),\; 0<\e< a_1^{-1}\e_0. &
\end{split}
\end{equation}
The form sum $L_{a,\Omega,\Gamma} +_{\gq} V$ is an m-sectorial operator which satisfies
\begin{align}
\begin{split}
& \dom\big((L_{a,\Omega,\Gamma} +_{\gq} V)^{1/2} \big) = \dom\big(((L_{a,\Omega,\Gamma} +_{\gq} V)^*)^{1/2} \big)   \\
& \quad = \dom\big(L_{a,\Omega,\Gamma}^{1/2}\big) = \dom\big((L_{a,\Omega,\Gamma}^*)^{1/2}\big) 
= W^{1,2}_{\Gamma}(\Omega).      \lb{2.18}
\end{split} 
\end{align}
\end{theorem}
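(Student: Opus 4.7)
The plan is to first establish the explicit infinitesimal form bound \eqref{2.16} against $-\Delta_{\Omega,\Gamma}$ via a Sobolev embedding combined with H\"older interpolation, then transfer it to \eqref{2.17} by a simple rescaling using uniform ellipticity, and finally combine this form boundedness with the Kato square root identity of Theorem \ref{t2.5} and the abstract perturbation result of \cite{GHN12} to conclude the chain of equalities \eqref{2.18}.

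For part $(i)$, decompose $V = V_p + V_\infty$ with $V_p \in L^p(\Omega)$ and $V_\infty \in L^\infty(\Omega)$; the $L^\infty$-piece contributes only a bounded quadratic form, so the essential work concerns $V_p$. H\"older's inequality gives
\begin{equation*}
\big\| |V_p|^{1/2} f \big\|_{L^2(\Omega)}^2 \leq \|V_p\|_{L^p(\Omega)} \|f\|_{L^q(\Omega)}^2, \qquad q = \frac{2p}{p-1},
\end{equation*}
and the assumption $p > n/2$ forces $q < 2n/(n-2)$ for $n \geq 3$ (resp.\ $q < \infty$ for $n=2$). H\"older interpolation between $L^2(\Omega)$ and $L^{2n/(n-2)}(\Omega)$ then gives $\|f\|_{L^q(\Omega)} \leq \|f\|_{L^2(\Omega)}^{1-\theta} \|f\|_{L^{2n/(n-2)}(\Omega)}^{\theta}$ with an explicit $\theta = \theta(n,p) \in (0,1)$. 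A Sobolev-type embedding $W^{1,2}_{\Gamma}(\Omega) \hookrightarrow L^{2n/(n-2)}(\Omega)$, available under Hypothesis \ref{h2.4} (using the local Lipschitz charts of item $(ii)$ together with the interior $n$-set property \eqref{2.1}), controls $\|f\|_{L^{2n/(n-2)}(\Omega)}$ by $\|(-\Delta_{\Omega,\Gamma})^{1/2} f\|_{L^2(\Omega)} + \|f\|_{L^2(\Omega)}$. Combining these estimates with Young's inequality and bookkeeping of exponents yields \eqref{2.16} with the advertised dependence $\varepsilon^{-n/(2p-n)}$.

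For part $(ii)$, the lower ellipticity estimate in \eqref{2.6} immediately gives
\begin{equation*}
a_1 \big\|(-\Delta_{\Omega,\Gamma})^{1/2} f\big\|_{L^2(\Omega)}^2 = a_1 \|\nabla f\|_{L^2(\Omega)}^2 \leq \Re\big[\gq^{}_{a,\Omega,\Gamma}(f,f)\big], \qquad f \in W^{1,2}_{\Gamma}(\Omega),
\end{equation*}
so the substitution $\varepsilon \mapsto a_1 \varepsilon$ in \eqref{2.16} produces \eqref{2.17} at once. The infinitesimal form boundedness then triggers the KLMN-type construction (cf.\ the comments around \cite[eq.~(A.42)]{GHN12}) and yields a densely defined, closed, sectorial form $\gq^{}_{a,\Omega,\Gamma} +_{\gq} \gq^{}_{V}$ on $W^{1,2}_{\Gamma}(\Omega)$, whose associated m-sectorial operator is $L_{a,\Omega,\Gamma} +_{\gq} V$. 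To derive the chain \eqref{2.18}, I would invoke the abstract stability theorem of \cite{GHN12} with $T_0 = L_{a,\Omega,\Gamma}$ and $W = V$: its two inputs are the Kato identity $\dom\big(T_0^{1/2}\big) = \dom\big((T_0^*)^{1/2}\big)$, furnished by Theorem \ref{t2.5}, and infinitesimal form boundedness of $V$ with respect to both $T_0$ and $T_0^*$. The adjoint estimate follows by the identical argument applied to $L_{a,\Omega,\Gamma}^* = L_{a^*,\Omega,\Gamma}$, since $a^*$ satisfies the same coercivity bound with the common constant $a_1$.

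The principal obstacle is the Sobolev-type embedding $W^{1,2}_{\Gamma}(\Omega) \hookrightarrow L^{2n/(n-2)}(\Omega)$ in the non-Lipschitz corkscrew setting, which is not covered by the classical Sobolev theorem on Lipschitz domains; it must be extracted from an extension-type result compatible with Hypothesis \ref{h2.4} or from the geometric-measure-theoretic machinery underlying Theorem \ref{t2.5}. Once this embedding (or an equivalent global interpolation inequality) is secured, the remainder of Theorem \ref{t2.8} reduces to routine form-theoretic computations together with the abstract perturbation framework of \cite{GHN12}.
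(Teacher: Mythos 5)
Your proposal is correct in substance and arrives at the same estimates, but for part $(i)$ it takes a genuinely different route from the paper's proof of Theorem~\ref{t2.8}. The paper extends $f$ to all of $\bbR^n$ via an extension operator $\cE$ (from \cite[Lemma 3.3]{ABHDR12}, \cite[Lemma 3.4]{ER12}), extends $V$ by zero, invokes the already-known form bound $\|v_{\mathrm{ext}}g\|_{L^2(\bbR^n)}^2\leq\e\|(-\Delta)^{1/2}g\|_{L^2(\bbR^n)}^2+M\e^{-n/(2p-n)}\|g\|_{L^2(\bbR^n)}^2$ on $\bbR^n$ (\cite[Lemma 3.7]{GHN12}), and pulls the inequality back to $\Omega$ through $\cE$. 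You instead work directly on $\Omega$: H\"older against $\|V_p\|_{L^p(\Omega)}$, interpolate $L^q$ between $L^2$ and $L^{2^*}$ with $\theta=n/(2p)$, appeal to the embedding $W^{1,2}_\Gamma(\Omega)\hookrightarrow L^{2^*}(\Omega)$, and close with Young's inequality. Your exponent bookkeeping is correct (the Young step yields $\theta/(1-\theta)=n/(2p-n)$), and your route is in fact the argument the paper itself uses in the proof of Theorem~\ref{t2.10} for the critical index $p=n/2$. What the two approaches ultimately buy is the same, and they are morally equivalent: your ``principal obstacle,'' the Sobolev embedding $W^{1,2}_\Gamma(\Omega)\hookrightarrow L^{2^*}(\Omega)$ on a corkscrew domain, is exactly what the paper cites from \cite[Remark~3.4(ii)]{ABHDR12}, and that embedding is itself obtained via the same extension operator the paper uses directly. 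The paper's version is slightly more modular (it recycles the $\bbR^n$ lemma wholesale and has the same argument for $n=2$ and $n\geq3$), whereas your interpolation requires a small separate treatment for $n=2$ (Gagliardo--Nirenberg rather than interpolation to $L^{2n/(n-2)}$), which you correctly flag. Part $(ii)$, the rescaling by $a_1$ and the final appeal to the Kato identity \eqref{2.15} together with the abstract stability result \cite[Theorem~3.6]{GHN12}, matches the paper; your extra remark about $L_{a,\Omega,\Gamma}^*=L_{a^*,\Omega,\Gamma}$ is correct but not needed, since the cited abstract theorem already delivers the adjoint domain equalities.
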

\begin{proof}
For notational simplicity, and without loss of generality, we put the $L^\infty$-part $V_{\infty}$ of 
$V$ equal to zero for the remainder of this proof. Under Hypothesis \ref{h2.4}, there exists an extension operator $\cE$ satisfying 
\begin{equation}\lb{2.19}
(\cE f)(x)=f(x) \, \text{ for a.e.\ }\, x\in \Omega,\; f\in L^2(\Omega),
\end{equation}
with
\begin{align}
\begin{split} 
&\cE:W^{1,2}_{\Gamma}(\Omega)\rightarrow W^{1,2}(\bbR^n),    \\
&\|\cE f\|_{W^{1,2}(\bbR^n)}^2\leq C_1\|f\|_{W^{1,2}(\Omega)}^2, 
\quad f\in W^{1,2}(\Omega),    \lb{2.20}
\end{split} 
\end{align}
and
\begin{align}
\begin{split} 
&\cE:L^2(\Omega)\rightarrow L^2(\bbR^n),    \\
&\|\cE f\|_{L^2(\bbR^n)}^2\leq C_2\|f\|_{L^2(\Omega)}^2, 
\quad f\in L^2(\Omega),    \lb{2.21}
\end{split} 
\end{align}
for some constants $C_j>0$, $j=1,2$ (cf., e.g., \cite[Lemma 3.3]{ABHDR12}, \cite[Lemma 3.4]{ER12}).

Let $V_{\text{ext}}$ and $v_{\text{ext}}$ denote the extensions of $V$ and $v$, respectively, to 
all of $\bbR^n$ defined by setting $V_{\text{ext}}$ and $v_{\text{ext}}$ identical to zero on 
$\bbR^n\backslash\Omega$.  Evidently, $V_{\text{ext}}\in L^p(\bbR^n)$, so there exists a 
constant $M>0$ for which (cf., e.g., \cite[Lemma 3.7]{GHN12})
\begin{equation}\lb{2.22}
\begin{split}
\big\|v_{\text{ext}}f\big\|_{L^2(\bbR^n)}^2\leq \e \big\|(-\Delta)^{1/2} f\big\|_{L^2(\bbR^n)}^2
+ M\e^{-n/(2p-n)}\|f\|_{L^2(\bbR^n)}^2,&\\
f\in W^{1,2}(\bbR^n),\; \e>0.&
\end{split}
\end{equation}
Consequently, using \eqref{2.19}--\eqref{2.22}, one estimates
\begin{align}
\|vf\|_{L^2(\Omega)}^2&=\|v_{\text{ext}}\cE f\|_{L^2(\bbR^n)}^2\no\\
&\leq \varepsilon_1 \big\|(-\Delta)^{1/2} \cE f\big\|_{L^2(\bbR^n)}^2+M
\varepsilon_1^{-n/(2p-n)}\|\cE f\|_{L^2(\bbR^n)}^2\no\\
&=\varepsilon_1 \|\nabla \cE f\|_{L^2(\bbR^n)^n}^2+M\varepsilon_1^{-n/(2p-n)}\|\cE f\|_{L^2(\bbR^n)}^2\no\\
&\leq \varepsilon_1 C_1\|\nabla f\|_{L^2(\Omega)^n}^2+\big(\varepsilon_1 C_1 
+ C_0 M\epsilon^{-n/(2p-n)}\big)\|f\|_{L^2(\Omega)}^2\no\\
&\leq \varepsilon_1 C_1\|\nabla f\|_{L^2(\Omega)^n}^2 
+ \big(C_1+C_0M\big)\varepsilon_1^{-n/(2p-n)}\|f\|_{L^2(\Omega)}^2,\lb{2.23}\\
&\hspace*{4.5cm}f\in W^{1,2}_{\Gamma}(\Omega), \; 0 < \varepsilon_1 < 1.\no
\end{align}
To obtain the first term in the second equality above, we applied the 2nd representation theorem (cf., e.g., \cite[VI.2.23]{Ka80}) to the non-negative, self-adjoint operator $-\Delta$ on $W^{2,2}(\bbR^n)$ in 
$L^2(\bbR^n)$. The form bound in \eqref{2.16} now follows by choosing $\e=\varepsilon_1 C_1$ throughout \eqref{2.23} and noting that
\begin{equation}\lb{2.24}
\|\nabla f\|_{L^2(\Omega)^n}^2=\big\|(-\Delta_{\Omega,\Gamma})^{1/2}f\big\|_{L^2(\Omega)}^2, 
\quad f\in W^{1,2}_{\Gamma}(\Omega),
\end{equation}
by another application of the 2nd representation theorem (see \eqref{2.9} with $a(\,\cdot\,)=I_n$), 
proving item $(i)$.

In view of \eqref{2.15} and \eqref{2.16}, one notes that the hypotheses of \cite[Theorem 3.6]{GHN12} are met.  The statements in item $(ii)$ thus follow from a direct application of \cite[Theorem 3.6]{GHN12}.
\end{proof}

\begin{remark} \lb{r2.9} 
The proof of Theorem \ref{t2.8} follows the proof of \cite[Theorem 3.12]{GHN12} essentially verbatim 
with only one notable exception: In \cite[Theorem 3.12]{GHN12}, $\Omega$ is assumed to be a {\it strongly Lipschitz domain} and hence the Stein extension theorem (cf., e.g., \cite[Theorem 5.24]{AF03} or \cite[Theorem 5 in \S VI.3.1]{St70}) is applied to obtain a total extension operator.  In the present case, under the weaker assumptions on $\Omega$, appealing to the Stein extension theorem is {\it not} permitted; so 
instead, we apply \cite[Lemma 3.3]{ABHDR12}, \cite[Lemma 3.4]{ER12} to obtain the extension operator in \eqref{2.19}--\eqref{2.21} (cf. \cite[Remark 3.5]{ER12}).
\end{remark}

Next, we discuss infinitesimal form boundedness for potential coefficients in the critical exponent case in dimensions $n\geq 3$.

\begin{theorem}\lb{t2.10}
Assume Hypotheses \ref{h2.4} and \ref{h2.7}.  Then $V$ is infinitesimally form bounded with respect to $-\Delta_{\Omega,\Gamma}$,
\begin{equation}\lb{2.25}
\big\| |V|^{1/2} f \big\|_{L^2(\Omega)}^2\leq \e \big\|(-\Delta_{\Omega,\Gamma})^{1/2}f \big\|_{L^2(\Omega)}^2 
+ \eta(\e)\|f\|_{L^2(\Omega)}^2,\quad f\in W^{1,2}_{\Gamma}(\Omega),\; \e>0.
\end{equation}
As a result, $V$ is infinitesimally form bounded with respect to $L_{a,\Omega,\Gamma}$,
\begin{equation}\lb{2.26}
\big\| |V|^{1/2} f \big\|_{L^2(\Omega)}^2\leq \e\Re[\gq^{}_{a,\Omega,\Gamma}(f,f)]
+ \widetilde \eta(\e)\|f\|_{L^2(\Omega)}^2,\quad f\in W^{1,2}_{\Gamma}(\Omega),\; \e>0.  
\end{equation}
Here $\eta$ and $\widetilde \eta$ are non-negative functions defined on $(0,\infty)$, generally 
depending on $\Omega$, $n$, and $\Gamma$.
\end{theorem}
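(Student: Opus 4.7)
The plan is to follow the structure of the proof of Theorem \ref{t2.8}, reducing the problem on $\Omega$ to one on $\bbR^n$ via the extension operator $\cE$ from \eqref{2.19}--\eqref{2.21}. The essential new ingredient (compared to the subcritical case $p>n/2$) is that the scale-invariant estimate on $\bbR^n$ for $L^{n/2}$ potentials only yields form-boundedness, not infinitesimal form-boundedness, so an additional splitting of $V$ into a piece of small $L^{n/2}$-norm plus a bounded piece is required. As in the proof of Theorem \ref{t2.8}, one may assume without loss of generality that the $L^\infty$-part of $V$ vanishes, so $V\in L^{n/2}(\Omega)$; extend $V$ and $v=|V|^{1/2}$ by zero to all of $\bbR^n$, denoting the extensions $V_{\mathrm{ext}}$ and $v_{\mathrm{ext}}$.

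For the key estimate on $\bbR^n$, I would combine the Sobolev embedding $W^{1,2}(\bbR^n)\hookrightarrow L^{2n/(n-2)}(\bbR^n)$ (valid for $n\geq 3$) with H\"older's inequality using dual exponents $n/2$ and $n/(n-2)$. This yields a dimensional constant $C_n>0$ with
\begin{equation*}
\|v_{\mathrm{ext}} g\|_{L^2(\bbR^n)}^2 \leq C_n \|V_{\mathrm{ext}}\|_{L^{n/2}(\bbR^n)} \|\nabla g\|_{L^2(\bbR^n)^n}^2, \quad g\in W^{1,2}(\bbR^n).
\end{equation*}
To upgrade this to infinitesimal boundedness, for $k>0$ split $V_{\mathrm{ext}}=V_{\mathrm{ext}}\chi_{\{|V_{\mathrm{ext}}|>k\}}+V_{\mathrm{ext}}\chi_{\{|V_{\mathrm{ext}}|\leq k\}}$; by dominated convergence $\|V_{\mathrm{ext}}\chi_{\{|V_{\mathrm{ext}}|>k\}}\|_{L^{n/2}(\bbR^n)}\to 0$ as $k\to\infty$, while the truncated part is in $L^\infty(\bbR^n)$ with norm at most $k$. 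Applying the displayed bound to the unbounded part and the trivial bound to the bounded part, then pulling back via \eqref{2.19}--\eqref{2.21} with $f\in W^{1,2}_{\Gamma}(\Omega)$, one obtains
\begin{equation*}
\|vf\|_{L^2(\Omega)}^2 \leq C_n C_1 \|V_{\mathrm{ext}}\chi_{\{|V_{\mathrm{ext}}|>k\}}\|_{L^{n/2}(\bbR^n)} \|f\|_{W^{1,2}(\Omega)}^2 + k\, C_2\|f\|_{L^2(\Omega)}^2.
\end{equation*}
Given $\e>0$, I would select $k=k(\e)$ large enough that $C_n C_1 \|V_{\mathrm{ext}}\chi_{\{|V_{\mathrm{ext}}|>k\}}\|_{L^{n/2}} < \e/2$; absorbing the $\|f\|_{L^2(\Omega)}^2$ contribution from $\|f\|_{W^{1,2}(\Omega)}^2$ into the second term and invoking the 2nd representation theorem for $-\Delta_{\Omega,\Gamma}$ (as in \eqref{2.24}) then yields \eqref{2.25} with $\eta(\e):=\e/2+k(\e) C_2+\e/2$.

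For the $a$-weighted estimate \eqref{2.26}, I would invoke the uniform ellipticity in Hypothesis \ref{h2.4}\,$(iv)$, which gives $\Re[\gq^{}_{a,\Omega,\Gamma}(f,f)]\geq a_1\|\nabla f\|_{L^2(\Omega)^n}^2 = a_1\|(-\Delta_{\Omega,\Gamma})^{1/2}f\|_{L^2(\Omega)}^2$, so that \eqref{2.25} applied with $\e/a_1$ in place of $\e$ yields \eqref{2.26} with $\widetilde\eta(\e):=\eta(\e/a_1)$. The main obstacle (relative to Theorem \ref{t2.8}) is that $\eta(\e)$ cannot be written as an explicit power of $\e$: it depends on the distribution function of $V_{\mathrm{ext}}$, not just on a single $L^{n/2}$-norm, which is why only the qualitative infinitesimal bound is claimed. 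Aside from this point, the argument is a direct adaptation of the proof of Theorem \ref{t2.8}, using the extension operator furnished by \cite[Lemma 3.3]{ABHDR12}, \cite[Lemma 3.4]{ER12} rather than the Stein extension (see Remark \ref{r2.9}).
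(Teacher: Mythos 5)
Your argument is correct and establishes Theorem \ref{t2.10}, but it takes a mildly different route from the paper's. You pass through the extension operator $\cE$ of \eqref{2.19}--\eqref{2.21} (exactly as in the proof of Theorem \ref{t2.8}), apply the homogeneous Sobolev inequality on $\bbR^n$ together with H\"older, and split $V_{\mathrm{ext}}$ by truncation at a height $k=k(\e)$ chosen large enough that the tail has small $L^{n/2}$-norm. The paper avoids the extension operator entirely for this theorem: it invokes the inhomogeneous Sobolev embedding $W^{1,2}_{\Gamma}(\Omega)\hookrightarrow L^{2^*}(\Omega)$ directly (as recorded in \eqref{2.27}, citing \cite[Remark 3.4(ii)]{ABHDR12}), and then performs the H\"older estimate and the splitting argument purely on $\Omega$, writing $V=V_{n/2,\e}+V_{\infty,\e}$ with $\|V_{n/2,\e}\|_{L^{n/2}(\Omega)}\le\e/c$ (see \eqref{2.30}). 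Your truncation decomposition and the paper's abstract $L^{n/2}+L^\infty$ decomposition with small $L^{n/2}$-part are of course equivalent -- truncation is simply one concrete way to realize the latter. The only substantive difference is that your route re-imports the extension machinery (and thus Hypothesis \ref{h2.4}\,$(i)$--$(iii)$) at a second stage where the paper does not need it, making the paper's version slightly shorter and more parallel to the Lipschitz-domain argument in \cite[Theorem 3.14(iii)]{GHN12}. Both approaches yield the same qualitative form bound, and your closing remark explaining why $\eta(\e)$ cannot have an explicit power dependence on $\e$ in the critical case is well taken.

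One small bookkeeping note: after choosing $k(\e)$ so that the gradient-term coefficient is below $\e/2$, the resulting $\eta(\e)$ is $\e/2+k(\e)C_2$, not $\e/2+k(\e)C_2+\e/2$; the extra $\e/2$ you append double-counts the absorbed $\|f\|_{L^2(\Omega)}^2$-contribution. This is harmless, since $\eta$ is merely required to be some non-negative function, but it is worth tidying.
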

\begin{proof}
Again, for simplicity, we put the $L^\infty$-part $V_{\infty}$ of $V$ equal to zero.  The proof is a straightforward modification of the proof of the corresponding result for Lipschitz domains given in \cite[Theorem 3.14\,$(iii)$]{GHN12}, and we present the modified argument here for completeness.  By  Sobolev 
embedding (cf., e.g., \cite[Remark 3.4\,$(ii)$]{ABHDR12}),
\begin{equation}\lb{2.27}
W^{1,2}_{\Gamma}(\Omega) \hookrightarrow L^{2^*}(\Omega), \quad 2^* = 2n/(n-2),
\end{equation}
where ``$\hookrightarrow$'' abbreviates continuous (and dense) embedding, and hence there 
exists a constant $c>0$ such that
\begin{equation}\lb{2.28}
\|f\|_{L^{2^*}(\Omega)}^2\leq c\big(\|\nabla f \|_{L^2(\Omega)^n}^2+\|f\|_{L^2(\Omega)}^2 \big), 
\quad f \in W^{1,2}_{\Gamma}(\Omega).
\end{equation}
Using H\"older's inequality, \eqref{2.28} implies
\begin{equation}\lb{2.29}
\begin{split}
(f,|W|f)_{L^2(\Omega)}\leq c\|W\|_{L^{n/2}(\Omega)}\big(\|\nabla f \|_{L^2(\Omega)^n}^2 
+ \|f\|_{L^2(\Omega)}^2\big),&\\
f \in W^{1,2}_{\Gamma}(\Omega), \; W\in L^{n/2}(\Omega).&
\end{split}
\end{equation}
Next, let $\e>0$ be given.  Since $V\in L^{n/2}(\Omega)$, there exist functions 
$V_{n/2,\e}\in L^{n/2}(\Omega)$ and $V_{\infty,\e}\in L^{\infty}(\Omega)$ with
\begin{equation}\lb{2.30}
\|V_{n/2,\e}\|_{L^{n/2}(\Omega)}\leq\e/c, \quad V(x)=V_{n/2,\e}(x)+V_{\infty,\e}(x) \, 
\text{ for a.e.\ $x\in \Omega$}.
\end{equation}
Applying \eqref{2.29} with $W=V_{n/2,\e}$, one estimates
\begin{align}
\|v f\|_{L^2(\Omega)} &= (f,|V|f)_{L^2(\Omega)} 
\leq (f,[|V_{n/2,\e}|+\|V_{\infty,\e}\|_{L^{\infty}(\Omega)}]f)_{L^2(\Omega)}\no\\
&\leq \e\|\nabla f \|_{L^2(\Omega)^n}^2+\eta(\e)\|f\|_{L^2(\Omega)}^2, 
\quad f \in W^{1,2}_{\Gamma}(\Omega),    \lb{2.31}
\end{align}
with 
\begin{equation}\lb{2.32}
\eta(\e):=\e+\|V_{\infty,\e}\|_{L^{\infty}(\Omega)}.
\end{equation}
Noting the fact that 
\begin{equation}\lb{2.33}
\|\nabla f\|_{L^2(\Omega)^n}^2=\big\|(-\Delta_{\Omega,\Gamma})^{1/2}f \big\|_{L^2(\Omega)}^2, 
\quad f \in W_{\Gamma}^{1,2}(\Omega),
\end{equation}
by the 2nd representation theorem (cf., e.g., \cite[Theorem VI.2.23]{Ka80}), \eqref{2.31} then 
also yields 
\begin{equation}\lb{2.34}
\|vf\|_{L^2(\Omega)}^2\leq \e\|(-\Delta_{\Omega,\Gamma})^{1/2} f\|_{L^2(\Omega)^n}^2 
+ \eta(\e)\|f\|_{L^2(\Omega)}^2,\quad f \in W_{\Gamma}^{1,2}(\Omega).
\end{equation}
Since $\e>0$ was arbitrary and $v=|V|^{1/2}$, \eqref{2.25} follows.

To prove \eqref{2.26}, one notes that the uniform ellipticity condition on $a$ implies
\begin{equation}\lb{2.35}
\|\nabla f\|_{L^2(\Omega)^n}^2 \leq a_1^{-1} \Re[\gq^{}_{a,\Omega,\Gamma}(f,f)], 
\quad f\in W^{1,2}_{\Gamma}(\Omega).
\end{equation}
Taking \eqref{2.35} together with \eqref{2.25} and \eqref{2.33}, one infers that
\begin{equation}\lb{2.36}
\|vf\|_{L^2(\Omega)}^2\leq a_1^{-1}\varepsilon_1 \Re[\gq^{}_{a,\Omega,\Gamma}(f,f)]
+\eta(\varepsilon_1)\|f\|_{L^2(\Omega)}^2, \quad f\in W^{1,2}_{\Gamma}(\Omega),\; \varepsilon_1 > 0.
\end{equation}
The form bound in \eqref{2.26} follows by taking $\varepsilon_1 = a_1 \e$, $\e>0$, in \eqref{2.36}. 
\end{proof}

\section{The Case of Matrix-Valued Divergence Form \\ Elliptic Partial Differential Operators}  \lb{s3}

In this section we consider uniformly elliptic partial differential operators in divergence form 
in the vector-valued context, that is, we will focus on $N\times N$ matrix-valued differential 
expressions $\mathbf{L}$ which act as 
\begin{equation}\lb{3.1}
\mathbf{L} u = - \Bigg(\sum_{j,k=1}^n\partial_j\Bigg(\sum_{\beta = 1}^N 
a^{\alpha,\beta}_{j,k}\partial_k u_\beta\Bigg)\Bigg)_{1\leq\alpha\leq N},\quad u=(u_1,\dots,u_N),  
\end{equation}
and prove our principal result concerning stability of square root domains with respect to additive 
perturbations.

To set the stage, we introduce the following set of hypotheses.

\begin{hypothesis}\lb{h3.1}
Fix $n \in \bbN\backslash\{1\}$, $N\in \bbN$.\\
$(i)$ Assume that $\Omega\subset \bbR^n$ is a non-empty, bounded, open, and connected set 
satisfying the interior corkscrew condition in Definition \ref{d2.1}\,$(i)$.\\
$(ii)$  For each $1\leq \alpha \leq N$, suppose $\Gamma_{\alpha}\subseteq \partial \Omega$ 
is a closed subset of $\partial \Omega$ which is either empty or an $(n-1)$-set, and let 
$\mathbb{G}=(\Gamma_1,\Gamma_2,\ldots,\Gamma_N)$.\\
$(iii)$  Around every point $x\in \ol{\partial \Omega \backslash \cap_{\alpha=1}^N \Gamma_\alpha}$, 
suppose there exists an open neighborhood $U_x \subset \bbR^n$ and a bi-Lipschitz map 
$\Phi_x:U_x\rightarrow (-1,1)^n$ such that 
\begin{align}
\Phi_x(x)&=0,\lb{3.2}\\
\Phi_x(\Omega \cap U_x)&=(-1,1)^{n-1}\times (-1,0),\lb{3.3}\\
\Phi_x(\partial \Omega \cap U_x)&=(-1,1)^{n-1}\times \{0\}.\lb{3.4}
\end{align}
$(iv)$ Define the set
\begin{equation}\lb{3.5}
\cW_{\mathbb{G}}(\Omega) = \prod_{\alpha=1}^N W^{1,2}_{\Gamma_{\alpha}}(\Omega),
\end{equation}
where $W^{1,2}_{\Gamma_{\alpha}}(\Omega)$ is defined as in \eqref{2.8} for each $1\leq \alpha \leq N$,  
suppose that 
\begin{equation} 
a_{j,k}^{\alpha,\beta}\in L^{\infty}(\Omega), \; 1\leq j,k\leq n, \; 1\leq \alpha,\beta\leq N,  
\end{equation} 
and assume that the sesquilinear form in $L^2(\Omega)^N$, 
\begin{equation}\lb{3.6}
\begin{split}
\mathfrak{L}_{a,\Omega,\mathbb{G}}(f,g) = \sum_{j,k=1}^n \sum_{\alpha,\beta=1}^N \int_{\Omega} d^nx \,\ol{(\partial_jf_{\alpha})(x)}a_{j,k}^{\alpha,\beta}(x)(\partial_k g_{\beta})(x),&\\
f,g\in \dom(\mathfrak{L}_{a,\Omega,\mathbb{G}}):=\cW_{\mathbb{G}}(\Omega),&
\end{split}
\end{equation}
satisfies a uniform ellipticity condition of the form, for some $\lambda>0$, 
\begin{equation}\lb{3.7}
\Re[\mathfrak{L}_{a,\Omega,\mathbb{G}}(f,g)]\geq \lambda \sum_{\alpha=1}^N 
\|\nabla f_{\alpha}\|_{L^2(\Omega)^n}^2,\quad f=(f_{\alpha})_{\alpha=1}^N\in \cW_{\mathbb{G}}(\Omega). 
\end{equation}
We denote by $\mathbf{L}_{a,\Omega,\mathbb{G}}$ the m-sectorial operator in $L^2(\Omega)^N$ uniquely associated to the sesquilinear form $\mathfrak{L}_{a,\Omega,\mathbb{G}}$.
\end{hypothesis}

Intuitively, $\mathfrak{L}_{a,\Omega,\mathbb{G}}$ acts on vectors $u = (u_1, u_2,\dots,u_N)$, where each component $u_{\alpha}$ formally satisfies a Dirichlet boundary condition along $\Gamma_{\alpha}$ and a 
Neumann condition along the remainder of the boundary, $\partial \Omega\backslash \Gamma_{\alpha}$, 
$1 \leq \alpha \leq N$ (cf., e.g., \cite[Corollary 4.2]{EHDT13a}). 

\begin{hypothesis}\lb{h3.2}
Let $n \in \bbN\backslash\{1\}$, $N\in \bbN$, and assume that $\Omega\subset \bbR^n$ is 
nonempty and open. Suppose, in addition, that $p>n/2$ and that 
$V_{\alpha,\beta}\in L^{p}(\Omega)+L^{\infty}(\Omega)$ for each $1\leq \alpha,\beta \leq N$. 
\end{hypothesis}

Assuming Hypotheses \ref{h3.1} and \ref{h3.2}, consider the operator of multiplication by the $N\times N$ matrix-valued function $\mathbf{V}=\{V_{\alpha,\beta}\}_{1\leq \alpha,\beta\leq N}$ in $L^2(\Omega)^N$ given by
\begin{equation}\lb{3.8} 
 (\mathbf{V}f)_{\alpha} = \sum_{\beta=1}^N V_{\alpha,\beta} f_{\beta},\quad 1\leq \alpha \leq N, \;  
f\in \dom(\mathbf{V}) = \big\{ f\in L^2(\Omega)^N\, \big|\, \mathbf{V}f \in L^2(\Omega)^N\big\}.
\end{equation}
Next, consider the generalized polar decomposition (cf. \cite{GMMN09}) for $\mathbf{V}$:
\begin{equation}\lb{3.8a}
\mathbf{V} = |\mathbf{V}^*|^{1/2} U |\mathbf{V}|^{1/2},
\end{equation}
where $U$ is an appropriate partial isometry.
The sesquilinear form corresponding to $\mathbf{V}$ is then given by
\begin{equation}\lb{3.9}
\begin{split}
\mathfrak{V}(f,g) = \big(|\mathbf{V}^*|^{1/2} f, U|\mathbf{V}|^{1/2} g\big)_{L^2(\Omega)^N},&\\ 
f,g\in \dom(\mathfrak{V}) = \dom\big(|\mathbf{V}|^{1/2}\big)=\dom\big(|\mathbf{V}^*|^{1/2}\big).&
\end{split}
\end{equation}
With the $L^p(\Omega)$ assumption on each entry $V_{\alpha,\beta}$, $\mathfrak{V}$ is infinitesimally form bounded with respect to $\mathbf{L}_{a,\Omega,\mathbb{G}}$.  In order to prove this, it suffices to consider the case where the $L^{\infty}$-part of each $V_{\alpha,\beta}$ is zero.  In this case, one has the estimate
\begin{align}
|\mathfrak{V}(f,f)|^2 &= \big| \big(|\mathbf{V}^*|^{1/2} f, U|\mathbf{V}|^{1/2} f\big)_{L^2(\Omega)^N}\big|^2\\
&\leq \big|\big(|\mathbf{V}^*|^{1/2} f, |\mathbf{V}|^{1/2} f\big)_{L^2(\Omega)^N}\big|^2\\
&\leq \big\||\mathbf{V}^*|^{1/2} f \big\|_{L^2(\Omega)^N}^2 \big\||\mathbf{V}|^{1/2} f \big\|_{L^2(\Omega)^N}^2\\
&= \int_{\Omega} \big(|\mathbf{V}^*(x)|^{1/2}f(x), |\mathbf{V}^*(x)|^{1/2}f(x)\big)_{\mathbb{C}^N}\, d^nx\no\\
&\quad \times \int_{\Omega} \big(|\mathbf{V}(x)|^{1/2}f(x), |\mathbf{V}(x)|^{1/2}f(x)\big)_{\mathbb{C}^N}\, d^nx\\
&= \int_{\Omega} (f(x),|\mathbf{V}^*(x)|f(x))_{\mathbb{C}^N}\, d^nx \int_{\Omega} (f(x),|\mathbf{V}(x)|f(x))_{\mathbb{C}^N}\, d^nx\\
&\leq \int_{\Omega} \|\mathbf{V}^*(x)\|_{2}\|f(x)\|_{\mathbb{C}^N}^2\, d^nx \int_{\Omega} \|\mathbf{V}(x)\|_{2}\|f(x)\|_{\mathbb{C}^N}^2\, d^nx\\
&=\bigg[\int_{\Omega} \bigg(\sum_{\alpha=1}^N\sum_{\beta=1}^N |V_{\alpha,\beta}(x)|^2 \bigg)^{1/2}\|f(x)\|_{\mathbb{C}^N}^2\, d^nx\bigg]^2\\
&\leq \bigg[\int_{\Omega} W(x) \|f(x)\|_{\mathbb{C}^N}^2\, d^nx\bigg]^2,\quad f\in \dom\big(|\mathbf{V}|^{1/2}\big),\lb{3.17a}
\end{align}
where we have set
\begin{equation}
W(x) = \sum_{\alpha=1}^N\sum_{\beta=1}^N |V_{\alpha,\beta}(x)| \,\text{ for a.e. $x\in \Omega$,}
\end{equation}
and used $\|\, \cdot\,\|_2$ to denote the Hilbert--Schmidt norm of a matrix in $\mathbb{C}^{N\times N}$.  The estimate in \eqref{3.17a} subsequently implies
\begin{align}
|\mathfrak{V}(f,f)|&\leq \int_{\Omega} \big\| W(x)^{1/2}f(x)\|_{\mathbb{C}^N}^2\, d^nx\no\\
&=\sum_{\alpha=1}^N \int_{\Omega}\big|W(x)^{1/2} f_{\alpha}(x)\big|^2\, d^nx\no\\
&=\sum_{\alpha=1}^N \big\|W^{1/2}f_{\alpha} \big\|_{L^2(\Omega)}^2,\quad f\in \dom\big(|\mathbf{V}|^{1/2}\big).\lb{3.20a}
\end{align}
By hypothesis, one infers that $W\in L^p(\Omega)$.  Since $p>n/2$, $W$ is infinitesimally form bounded with respect to $-\Delta_{\Omega,\Gamma_{\alpha}}$ for each $1\leq \alpha \leq N$ (recalling the notational convention $-\Delta_{\Omega,\Gamma}=L_{I_n,\Omega,\Gamma}$ set forth in \eqref{2.12}), with a form bound of the following type:
\begin{equation}\lb{3.21a}
\begin{split}
\big\|W^{1/2}f \big\|_{L^2(\Omega)}^2\leq \varepsilon \big\|(-\Delta_{\Omega,\Gamma_{\alpha}})^{1/2}f \big\|_{L^2(\Omega)} + M_{\alpha}\varepsilon^{-n/(2p-n)}\|f\|_{L^2(\Omega)}^2,&\\
f\in W_{\Gamma_{\alpha}}^{1,2}(\Omega),\, 0<\varepsilon<1,\, 1\leq \alpha\leq N,&
\end{split}
\end{equation}
for appropriate constants $M_{\alpha}>0$, $1\leq \alpha \leq N$.
Setting $M=\max_{1\leq \alpha\leq N}M_{\alpha}$ and applying \eqref{3.21a} to each term of the summation in \eqref{3.20a}, one obtains
\begin{align}
|\mathfrak{V}(f,f)|&\leq \varepsilon \sum_{\alpha=1}^N \big\|(-\Delta_{\Omega,\Gamma_{\alpha}})^{1/2}f_{\alpha} \big\|_{L^2(\Omega)}^2 + M \sum_{\alpha=1}^N\varepsilon^{-n/(2p-n)}\|f_{\alpha}\|_{L^2(\Omega)}^2\no\\
&= \varepsilon \sum_{\alpha=1}^N\|\nabla f_{\alpha}\|_{L^2(\Omega)}^2 + M \varepsilon^{-n/(2p-n)}\|f\|_{L^2(\Omega)^N}^2, \lb{3.22a}\\
& \hspace*{3.68cm} f\in \cW_{\mathbb{G}}(\Omega),\, 0<\varepsilon<1.\no
\end{align}
Finally, applying the uniform ellipticity condition \eqref{3.7} to \eqref{3.22a}, one obtains the form bound,
\begin{align}\lb{3.15}
\begin{split} 
|\mathfrak{V}(f,f)|\leq \lambda^{-1}\varepsilon\, \Re[\mathfrak{L}_{a,\Omega,\mathbb{G}}(f,f)] 
+ M\varepsilon^{-n/(2p-n)}\|f\|_{L^2(\Omega)^N}^2,& \\
f\in \cW_{\mathbb{G}}(\Omega),\; 0<\varepsilon<1.&
\end{split} 
\end{align}
By suitably rescaling $\varepsilon$ throughout \eqref{3.15}, one infers that $\mathbf{V}$ is infinitesimally form bounded with respect to $\mathbf{L}_{a,\Omega,\mathbb{G}}$.  Infinitesimal form boundedness of $\mathbf{V}$ with respect to $\mathbf{L}_{a,\Omega,\mathbb{G}}$ is summarized in the following result.

\begin{theorem}\lb{t3.3}
Assume Hypotheses \ref{h3.1} and \ref{h3.2}.  Then $\mathbf{V}$ is infinitesimally form bounded with respect to $\mathbf{L}_{a,\Omega,\mathbb{G}}$ and there exist constants $M>0$ and $\varepsilon_0>0$ such that
\begin{equation}\lb{3.16}
|\mathfrak{V}(f,f)|\leq \varepsilon\, \Re[\mathfrak{L}_{a,\Omega,\mathbb{G}}(f,f)] 
+ M\varepsilon^{-n/(2p-n)}\|f\|_{L^2(\Omega)^N}^2, \quad 
f\in \cW_{\mathbb{G}}(\Omega),\; 0<\varepsilon<\varepsilon_0.
\end{equation}
\end{theorem}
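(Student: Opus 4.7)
The plan is to follow the chain of estimates \eqref{3.17a}--\eqref{3.15} that has already been assembled in the paragraphs preceding the statement, with the final step being just a rescaling of the form parameter. First I would reduce to the case in which every $L^\infty$-component of the entries $V_{\alpha,\beta}$ vanishes: since an $L^\infty$ entry generates a bounded operator on $L^2(\Omega)^N$ and thus a bounded sesquilinear form, it can be absorbed into the additive constant term $M \varepsilon^{-n/(2p-n)}\|f\|^2_{L^2(\Omega)^N}$ without affecting the $\varepsilon$-proportional term. So assume each $V_{\alpha,\beta} \in L^p(\Omega)$ with $p > n/2$.

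Next, I would use the generalized polar decomposition $\mathbf{V} = |\mathbf{V}^*|^{1/2} U |\mathbf{V}|^{1/2}$ to represent $\mathfrak{V}(f,f)$, apply Cauchy--Schwarz, and dominate the pointwise quadratic forms $(f(x),|\mathbf{V}(x)|f(x))_{\mathbb{C}^N}$ and $(f(x),|\mathbf{V}^*(x)|f(x))_{\mathbb{C}^N}$ by $\|\mathbf{V}(x)\|_2 \|f(x)\|_{\mathbb{C}^N}^2$, where $\|\cdot\|_2$ is the Hilbert--Schmidt norm. Bounding $\|\mathbf{V}(x)\|_2 \le W(x) := \sum_{\alpha,\beta} |V_{\alpha,\beta}(x)|$ and applying Cauchy--Schwarz once more (as in \eqref{3.17a}--\eqref{3.20a}) yields the scalar reduction
\begin{equation*}
|\mathfrak{V}(f,f)| \;\leq\; \sum_{\alpha=1}^N \big\| W^{1/2} f_\alpha \big\|_{L^2(\Omega)}^2, \quad f \in \cW_{\mathbb{G}}(\Omega).
\end{equation*}
Since each $V_{\alpha,\beta} \in L^p(\Omega)$ and finite sums of $L^p$-functions are in $L^p$, we have $W \in L^p(\Omega)$ with $p > n/2$, so Theorem \ref{t2.8}\,$(i)$ applies to each component $f_\alpha \in W^{1,2}_{\Gamma_\alpha}(\Omega)$ to yield
\begin{equation*}
\big\|W^{1/2} f_\alpha\big\|_{L^2(\Omega)}^2 \leq \varepsilon \|\nabla f_\alpha\|_{L^2(\Omega)^n}^2 + M_\alpha \varepsilon^{-n/(2p-n)} \|f_\alpha\|_{L^2(\Omega)}^2
\end{equation*}
for $0 < \varepsilon < \varepsilon_0$, uniformly in $\alpha$ after taking $M := \max_{1 \le \alpha \le N} M_\alpha$.

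Summing over $\alpha$ and then invoking the uniform ellipticity condition \eqref{3.7} to replace $\sum_\alpha \|\nabla f_\alpha\|_{L^2(\Omega)^n}^2$ by $\lambda^{-1} \Re[\mathfrak{L}_{a,\Omega,\mathbb{G}}(f,f)]$ produces
\begin{equation*}
|\mathfrak{V}(f,f)| \leq \lambda^{-1}\varepsilon \, \Re[\mathfrak{L}_{a,\Omega,\mathbb{G}}(f,f)] + M \varepsilon^{-n/(2p-n)} \|f\|_{L^2(\Omega)^N}^2, \quad f \in \cW_{\mathbb{G}}(\Omega),
\end{equation*}
for $0 < \varepsilon < \varepsilon_0$; the form bound \eqref{3.16} then follows by a harmless rescaling $\varepsilon \mapsto \lambda \varepsilon$ (with a correspondingly rescaled $M$ and $\varepsilon_0$). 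Since $\varepsilon_0$ can be chosen arbitrarily small, infinitesimal form boundedness is immediate.

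The main obstacle is not really analytic but structural: reducing a matrix-valued form estimate to scalar Kato-class bounds so that Theorem \ref{t2.8}\,$(i)$ is applicable componentwise. The generalized polar decomposition of \cite{GMMN09} together with the double Cauchy--Schwarz trick is what makes this reduction work, and once $W \in L^p(\Omega)$ is in hand the scalar theory does the rest. No additional hypothesis on $\mathbb{G}$ beyond what is needed in Hypothesis \ref{h3.1} is required, because Theorem \ref{t2.8}\,$(i)$ is applied to each coordinate $\Gamma_\alpha$ separately.
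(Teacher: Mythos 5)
Your proposal is correct and reproduces the paper's own argument essentially verbatim: the paper develops exactly this chain (\eqref{3.17a}--\eqref{3.15}) in the paragraphs immediately preceding the theorem statement, using the generalized polar decomposition, the Hilbert--Schmidt dominance by the scalar function $W = \sum_{\alpha,\beta}|V_{\alpha,\beta}| \in L^p(\Omega)$, componentwise application of the scalar form bound, summation, uniform ellipticity \eqref{3.7}, and a final rescaling of $\varepsilon$. The only (very minor) refinement worth noting is that the per-component thresholds $\varepsilon_0^\alpha$ from Theorem \ref{t2.8}\,$(i)$ should be replaced by their minimum over $\alpha$ before taking $M := \max_\alpha M_\alpha$, but this does not affect the substance.
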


In view of Theorem \ref{t3.3}, the form sum $\mathbf{L}_{a,\Omega,\mathbb{G}}+_{\mathfrak{q}}\mathbf{V}$ is well-defined and represents an m-sectorial operator in $L^2(\Omega)^N$.  Our next result extends stability of square root domains to $\mathbf{L}_{a,\Omega,\mathbb{G}}$ and $\mathbf{L}_{a,\Omega,\mathbb{G}}+_{\mathfrak{q}}\mathbf{V}$.

\begin{theorem}\lb{t3.4}
Assume Hypotheses \ref{h3.1} and \ref{h3.2} and let $\mathbf{V}$ denote the operator of component-wise multiplication in $L^2(\Omega)^N$ defined in \eqref{3.8}.  Then
\begin{equation}\lb{3.17}
\begin{split}
&\dom\big((\mathbf{L}_{a,\Omega,\mathbb{G}}+_{\mathfrak{q}}\mathbf{V})^{1/2}\big) =  \dom\big(((\mathbf{L}_{a,\Omega,\mathbb{G}}+_{\mathfrak{q}}\mathbf{V})^*)^{1/2}\big)\\
&\quad = \dom\big(\mathbf{L}_{a,\Omega,\mathbb{G}}^{1/2}\big) = \dom\big((\mathbf{L}_{a,\Omega,\mathbb{G}}^*)^{1/2}\big) = \cW_{\mathbb{G}}(\Omega).
\end{split}
\end{equation}
\end{theorem}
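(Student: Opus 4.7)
The plan is to mirror, in the vector-valued setting, the two-step strategy used to prove Theorem \ref{t2.8}. Namely, I would first establish Kato's square root identity for the unperturbed operator $\mathbf{L}_{a,\Omega,\mathbb{G}}$, and then transfer that identity to the form-perturbed operator $\mathbf{L}_{a,\Omega,\mathbb{G}} +_{\mathfrak{q}} \mathbf{V}$ by invoking the abstract stability theorem \cite[Theorem 3.6]{GHN12}, whose two hypotheses will have been supplied respectively by this unperturbed identity and by Theorem \ref{t3.3}.

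For step one, I would appeal to the system version of the Egert--Haller-Dintelmann--Tolksdorf square root theorem, namely \cite[Corollary 4.2]{EHDT13a}, which is the vectorial counterpart of Theorem \ref{t2.5}. The conditions placed on $\Omega$ (interior corkscrew) and on each $\Gamma_\alpha$ (closed and either empty or an $(n-1)$-set), together with the bi-Lipschitz chart condition around $\overline{\partial\Omega\setminus\bigcap_\alpha\Gamma_\alpha}$ in Hypothesis \ref{h3.1}, were imposed precisely to fall within the scope of that corollary; moreover, the Legendre--Hadamard type ellipticity assumption \eqref{3.7} provides the G{\aa}rding inequality required there. Thus one obtains
\begin{equation}
\dom\big(\mathbf{L}_{a,\Omega,\mathbb{G}}^{1/2}\big) = \dom\big((\mathbf{L}_{a,\Omega,\mathbb{G}}^*)^{1/2}\big) = \cW_{\mathbb{G}}(\Omega).
\end{equation}

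For step two, I would plug this identity, together with the infinitesimal form bound \eqref{3.16} from Theorem \ref{t3.3}, into the abstract perturbation result \cite[Theorem 3.6]{GHN12}, taking $\cH = L^2(\Omega)^N$, $T_0 = \mathbf{L}_{a,\Omega,\mathbb{G}}$, and the perturbation form to be $\mathfrak{V}$ from \eqref{3.9}. The conclusion of that abstract theorem is exactly that $\mathbf{L}_{a,\Omega,\mathbb{G}} +_{\mathfrak{q}} \mathbf{V}$ is m-sectorial and that the square root domains of $\mathbf{L}_{a,\Omega,\mathbb{G}} +_{\mathfrak{q}} \mathbf{V}$ and of its adjoint both coincide with $\dom\big(\mathbf{L}_{a,\Omega,\mathbb{G}}^{1/2}\big)$, which, combined with step one, yields \eqref{3.17}.

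I expect the main obstacle to be verifying that the hypotheses of the abstract theorem \cite[Theorem 3.6]{GHN12} are genuinely satisfied in the system case with the \emph{generalized} polar decomposition \eqref{3.8a} of $\mathbf{V}$, since $\mathbf{V}$ need not be normal and so care is required to match $|\mathbf{V}|^{1/2}$, $|\mathbf{V}^*|^{1/2}$, and the partial isometry $U$ against the factorization $W = uv$ used in the scalar abstract setup. Once this bookkeeping is done, the remainder is a direct transcription: the form bound \eqref{3.16} is already written in the precise $\varepsilon$-relative form that \cite[Theorem 3.6]{GHN12} consumes, and step one furnishes Kato's identity, so no new analytic estimate is needed beyond those already established in Theorem \ref{t3.3} and \cite[Corollary 4.2]{EHDT13a}.
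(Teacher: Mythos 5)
Your outline captures the right two-step shape: Kato's identity for the unperturbed system operator, then an abstract perturbation result from \cite{GHN12}. But the step you flag at the end as an ``obstacle'' is not mere bookkeeping; it is exactly where the paper's proof diverges from your plan. The paper does \emph{not} re-use \cite[Theorem~3.6]{GHN12} (as was done in the scalar Theorem~\ref{t2.8}), because that result is tailored to the scalar factorization $V = uv$ with $u = e^{i\arg V}v$. For the matrix potential one must use the generalized polar decomposition $\mathbf{V}=\mathbf{B}^*\mathbf{A}$ with $\mathbf{A}=U|\mathbf{V}|^{1/2}$, $\mathbf{B}=|\mathbf{V}^*|^{1/2}$, where $\mathbf{A}$ and $\mathbf{B}$ are genuinely different operators, and this does not match the hypotheses of \cite[Theorem~3.6]{GHN12}. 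Instead the paper descends to the more primitive abstract machinery: it introduces the diagonal comparison operator $\mathbf{D}_{\Omega,\mathbb{G}}$ (acting componentwise as $-\Delta_{\Omega,\Gamma_\alpha}$, with $\dom(\mathbf{D}_{\Omega,\mathbb{G}}^{1/2})=\cW_{\mathbb{G}}(\Omega)$), converts the form bound \eqref{3.22a} into a bound relative to $\mathbf{D}_{\Omega,\mathbb{G}}^{1/2}$, invokes \cite[Lemma~2.12]{GHN12} to obtain the resolvent-decay estimate $\|\mathbf{A}(\mathbf{D}_{\Omega,\mathbb{G}}+E)^{-1/2}\|\lesssim E^{-q}$, transfers it to $\mathbf{L}_{a,\Omega,\mathbb{G}}$ via \cite[Lemma~2.11]{GHN12} (using the coincidence of square-root domains from \cite[Theorem~9.2]{EHDT13a}), carries out the analogous adjoint argument for $\mathbf{B}$, and only then applies \cite[Corollary~2.7]{GHN12}, after verifying \cite[Hypothesis~2.1\,$(iii)$]{GHN12} via the product estimate \eqref{3.29}. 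Your sketch omits all of this resolvent-estimate chain, which is the actual content of the proof once the form bound from Theorem~\ref{t3.3} is in hand.

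One smaller correction: the vectorial Kato identity $\dom(\mathbf{L}_{a,\Omega,\mathbb{G}}^{1/2}) = \dom((\mathbf{L}_{a,\Omega,\mathbb{G}}^*)^{1/2}) = \cW_{\mathbb{G}}(\Omega)$ is taken from \cite[Theorem~9.2]{EHDT13a}, not from \cite[Corollary~4.2]{EHDT13a}; the latter is cited in the paper only for the informal interpretation of the mixed boundary conditions on each component.
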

\begin{proof}
Let $\mathbf{V}$ be factored into the form $\mathbf{V} = \mathbf{B}^* \mathbf{A}$
\begin{align}
\mathbf{A} = U|\mathbf{V}|^{1/2}, \quad \mathbf{B} = |\mathbf{V}^*|^{1/2}, \quad \dom(\mathbf{A}) = \dom(\mathbf{B}) = \dom(|\mathbf{V}|^{1/2}),
\end{align}
according to the generalized polar decomposition in \eqref{3.8a}.
One observes that $\cW_{\mathbb{G}}(\Omega)\subset \dom(\mathbf{A})=\dom(\mathbf{B})$.  Therefore, \cite[Theorem 9.2]{EHDT13a} implies
\begin{align}
\dom(\mathbf{A})\supseteq \dom\big(\mathbf{L}_{a,\Omega,\mathbb{G}}^{1/2}\big),\quad 
\dom(\mathbf{B})\supseteq \dom\big((\mathbf{L}_{a,\Omega,\mathbb{G}}^*)^{1/2}\big).\lb{3.19}
\end{align}

Next, let $\mathbf{D}_{\Omega,\mathbb{G}}$ denote the non-negative self-adjoint operator uniquely associated to the sesquilinear form
\begin{equation}\lb{3.20}
\begin{split}
\mathfrak{D}_{\Omega,\mathbb{G}}(f,g)=\sum_{j,k=1}^n \sum_{\alpha,\beta=1}^N \int_{\Omega} \ol{(\partial_jf_{\alpha})(x)}\delta_{j,k}\delta_{\alpha,\beta}(\partial_k g_{\beta})(x)\, d^nx,&\\
f,g\in \dom(\mathfrak{D}_{\Omega,\mathbb{G}}):=\cW_{\mathbb{G}}(\Omega),&
\end{split}
\end{equation}
where $\delta_{j,k}$ denotes the Kronecker delta symbol.  (One notes that \eqref{3.20} is 
simply \eqref{3.6} with tensor coefficients $a_{j,k}^{\alpha,\beta}=\delta_{j,k}\delta_{\alpha,\beta}$, 
$1\leq j,k \leq n$, $1\leq \alpha, \beta\leq N$.)  Then (cf., e.g., the discussion preceding 
\cite[Theorem 9.2]{EHDT13a})
\begin{equation}\lb{3.21}
(\mathbf{D}_{\Omega,\mathbb{G}}f)_{\alpha} = -\Delta_{\Omega,\Gamma_{\alpha}}f_{\alpha}, 
\quad 1\leq \alpha \leq N,\; f\in \dom(\mathbf{D}_{\Omega,\mathbb{G}})
=\prod_{\beta=1}^N \dom(-\Delta_{\Omega,\Gamma_{\beta}}),
\end{equation}
where we have used the notational convention 
$-\Delta_{\Omega,\Gamma_{\alpha}}=L_{I_n,\Omega,\Gamma_{\alpha}}$ set forth in \eqref{2.12}. 
In addition (cf. the discussion in the proof to \cite[Theorem 9.2]{EHDT13a}),
\begin{equation}\lb{3.22}
\big(\mathbf{D}_{\Omega,\mathbb{G}}^{1/2}f\big)_{\alpha} 
= (-\Delta_{\Omega,\Gamma_{\alpha}})^{1/2}f_{\alpha},\quad 1\leq \alpha \leq N,\; 
f\in \dom\big(\mathbf{D}_{\Omega,\mathbb{G}}^{1/2}\big)=\cW_{\mathbb{G}}(\Omega).
\end{equation}
As a result of \eqref{3.22}, the bound in \eqref{3.22a} actually implies
\begin{align}\lb{3.23}
\begin{split}
\|\mathbf{A}f\|_{L^2(\Omega)^N}^2\leq  \varepsilon \big\|\mathbf{D}_{\Omega,\mathbb{G}}^{1/2} f\big\|_{L^2(\Omega)^N}^2+M\varepsilon^{-n/(2p-n)}\|f\|_{L^2(\Omega)^N}^2,& \\
f\in \cW_{\mathbb{G}}(\Omega),\; 0<\varepsilon<1.&
\end{split}
\end{align}
Hence, \cite[Lemma 2.12]{GHN12} guarantees the existence of constants $M_1>0$, $q>0$, and 
$E_0\geq 1$ such that  
\begin{equation}\lb{3.24}
\big\|\mathbf{A}(\mathbf{D}_{\Omega,\mathbb{G}}+EI_{L^2(\Omega)^N})^{-1/2}\big\|_{\cB(L^2(\Omega)^N)}\leq M_1E^{-q},\quad E>E_0.
\end{equation}
Since $\dom\big(\mathbf{L}_{a,\Omega,\mathbb{G}}^{1/2}\big)=\dom\big(\mathbf{D}_{\Omega,\mathbb{G}}^{1/2}\big)$, \cite[Lemma 2.11]{GHN12} yields the existence of a constant $C>0$ such that 
\begin{equation}\lb{3.25}
\sup_{E\geq 1}\big\|(\mathbf{L}_{a,\Omega,\mathbb{G}}+EI_{L^2(\Omega)^N})^{1/2} (\mathbf{D}_{\Omega,\mathbb{G}}+EI_{L^2(\Omega)^N})^{-1/2} \big\|_{\cB(L^2(\Omega)^N)}\leq C.
\end{equation}
The estimates in \eqref{3.24} and \eqref{3.25} imply
\begin{equation}\lb{3.26}
\big\|\mathbf{A}(\mathbf{L}_{a,\Omega,\mathbb{G}}+EI_{L^2(\Omega)^N})^{-1/2}\big\|_{\cB(L^2(\Omega)^N)}\leq {\widehat M}_1 E^{-q},\quad E>E_0,
\end{equation}
for an appropriate constant ${\widehat M}_1>0$.  A similar argument involving adjoints can be used to show
\begin{equation}\lb{3.27}
\big\|\ol{(\mathbf{L}_{a,\Omega,\mathbb{G}}+EI_{L^2(\Omega)^N})^{-1/2}\mathbf{B}^*}\big\|_{\cB(L^2(\Omega)^N)}\leq {\widehat M}_2E^{-q},\quad E>E_0,
\end{equation}
for an appropriate constant ${\widehat M}_2 > 0$.

Finally, in light of \eqref{3.19}, \eqref{3.26}, \eqref{3.27}, and the fact that (cf. \cite[Theorem 9.2]{EHDT13a})
\begin{equation}\lb{3.28}
\dom\big(\mathbf{L}_{a,\Omega,\mathbb{G}}^{1/2}\big)=\dom\big((\mathbf{L}_{a,\Omega,\mathbb{G}}^*)^{1/2}\big),
\end{equation}
the string of equalities in \eqref{3.17} follows from an application of \cite[Corollary 2.7]{GHN12}.  
We note that \cite[Hypothesis 2.1\,$(iii)$]{GHN12} holds in the present setting since
\begin{align}
&\Big\|\ol{\mathbf{A}(\mathbf{L}_{a,\Omega,\mathbb{G}}+EI_{L^2(\Omega)^N})^{-1}\mathbf{B}^*}\Big\|_{\cB(L^2(\Omega)^N)}\no\\
&\quad \leq \big\|\mathbf{A}(\mathbf{L}_{a,\Omega,\mathbb{G}}+EI_{L^2(\Omega)^N})^{-1/2}\big\|_{\cB(L^2(\Omega)^N)}\lb{3.29}\\
&\qquad \times \big\|\ol{(\mathbf{L}_{a,\Omega,\mathbb{G}}+EI_{L^2(\Omega)^N})^{-1/2}\mathbf{B}^*}\big\|_{\cB(L^2(\Omega)^N)},\quad E>0,\no
\end{align}
and the estimates in \eqref{3.26}, \eqref{3.27} yield decay to zero as $E\uparrow \infty$ of the factors on the right-hand side of \eqref{3.29}.
\end{proof}

\noindent


\end{document}